\newtheorem{thm}{Theorem}[section]
\newtheorem{prop}[thm]{Proposition}
\newtheorem{lem}[thm]{Lemma}
\newtheorem{cor}[thm]{Corollary}
\begin{document}
\begin{center}
{\bf {\Large Narayana Sequence and The Brocard-Ramanujan Equation }}
\end{center}

\begin{center}
{\bf Mustafa Ismail}\\
{\bf mostafaesmail$\_$p${@}$sci.asu.edu.eg}\\
{\bf Mathematics Department, Faculty of Science }\\
{\bf Ain Shams University}\\
{\bf  Egypt}
\end{center}

\begin{center}
{\bf Salah Rihanaa \\
salahrihane@hotmail.fr}\\
{\bf Department of Mathematics \\
Institute of Science and Technology}\\
{\bf  University Center of Mila\\
Algeria}
\end{center}
\begin{center}
{\bf M. Anwar\\
mohmmed.anwar@hotmail.com \\
mohamedanwar@sci.asu.edu.eg}\\
{\bf Mathematics Department, Faculty of Science}\\
{\bf Ain Shams University}\\
{\bf Egypt}
\end{center}
\begin{abstract}
Let $\left\lbrace a_{n}\right\rbrace_{n\geq 0}$ be the Narayana Sequence  defined by the recurence  $a_{n}=a_{n-1}+a_{n-3}$ for all $n\geq 3$ with intital values $a_{0}=0$ and $a_{1}=a_{2}=1$. In This paper, we fully characterize the $3-$adic valuation of $a_{n}+1$ and $a_{n}-1$ and then we prove that there are no integer solutions $(u,m)$  to the Brocard-Ramanujan Equation  $m!+1=u^2$ where $u$  is a Narayana number.
\end{abstract}
\section{Introduction}
Diophantine equations involving factorial numbers have been studied by many mathematicians in the last few years. In 1975, Erd{\"o}s and Selfridge \cite{r1} proved $n!$ is a perfect power only when $n=1$. However, one of the most famous among such equations was posed by Brocard \cite{brocard1876question} in 1876 and independently by Ramanujan \cite{ramanujan1962ramanujan} in 1913. The diophantine equation
\begin{equation}\label{eqn1}
    m!+1=u^2
\end{equation}
is now known as \textbf{\textit{Brocard-Ramanujan Diophantine equation}}.
\\ The three known solutions $m=4,5,7$ are easy to check and no further solutions with $m\leq 10^9$ have been proved by Berndt and Galaway in \cite{berndt2000brocard}. Although, Overholt \cite{overholt1993diophantine} showed that the equation (\ref{eqn1}) has only many solutions under a weak version of the abc conjecture. The Brocard-Ramanujan equation is still an open problem. Grossman and Luca \cite{grossman2002sums} showed that if $k$ is fixed, then there are only finitely many positive integers $n$ such that
\begin{equation*}
    F_{n}=m_{1}!+m_{2}!+...+m_{k}!
\end{equation*}
holds for some positive integers $m_{1},m_{2},...,m_{k}$. Also all the solutions for the case $k\leq 2$ were determined. In 1999 Luca \cite{luca1999products} proved that $F_{n}$ is a product of factorials only when $n=1,2,3,6$ and $12 $. Also Luca and stanica \cite{luca2006f} showed that the largest product of distinct Fibonacci numbers which is a product of factorials is $F_{1}F_{2}F_{3}F_{4}F_{5}F_{6}F_{8}F_{10}F_{12}=11!$. In 2012 and In 2016, Marques \cite{marques2012fibonacci} \cite{faco2016tribonacci} proved that $(u,m)=(4,5)$ is the only solution of Eq.(\ref{eqn1}) where $u$ is a Fibonacci number and  there is no solution of Eq.(\ref{eqn1}) when $u$ is a Tribonacci number. Let 
$\left\lbrace a_{n}\right\rbrace_{n\geq 0}$ be the Narayana Sequence  defined by the recurence  $a_{n}=a_{n-1}+a_{n-3}$ for all $n\geq 3$ with intital values $a_{0}=0$ and $a_{1}=a_{2}=1$. The First terms of this sequence are
$$ 0,1,1,1,2,3,4,6,9,28,41,60,88,129,189,277.$$
Some properties of Narayana sequence and its generalizations can be found in \cite{allouche1996narayana}\cite{ballot2017family}\cite{bilgici2016generalized}. In 2021 R.Guadalupe \cite{guadalupe20213} determine all factorials in Narayana sequence. In this paper we solve (\ref{eqn1}) where $u $ is a Narayana number. We are following the same technique that has been used in \cite{marques2012fibonacci}\cite{guadalupe20213} by Vinicius Faco, Diego Marques, Nurettin Irmak and R.Guadalupe. More precisely, we prove the following theorem.  
\begin{thm}\label{thm 1}
There is no integer solution $(m,u)$ for the Brocard-Ramanujan equation (\ref{eqn1}), where $u$ is a Narayana number. 
\end{thm}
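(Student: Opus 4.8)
The plan is to convert the equation into a statement about $3$-adic valuations and then pit an upper bound coming from the Narayana recurrence against a lower bound coming from Legendre's formula. Suppose $u=a_n$ satisfies $a_n^2=m!+1$, and rewrite this as
\[
m! = a_n^2-1 = (a_n-1)(a_n+1).
\]
Since $(a_n+1)-(a_n-1)=2$, the integers $a_n-1$ and $a_n+1$ cannot both be divisible by $3$, so
\[
v_3(m!) = v_3(a_n-1)+v_3(a_n+1) = \max\{v_3(a_n-1),\,v_3(a_n+1)\},
\]
where exactly one summand is positive and which one is determined by the residue of $n$ modulo the period of $\{a_n \bmod 3\}$. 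This is precisely where the characterization of the $3$-adic valuation of $a_n\pm 1$ announced in the abstract enters: it should furnish an explicit bound of the shape $v_3(a_n^2-1)\le c_1+c_2\log n$, the point being that such valuations grow like the $3$-adic valuation of a linear function of $n$, hence only logarithmically.

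Next I would extract a lower bound on the same quantity from Legendre's formula, $v_3(m!)=(m-s_3(m))/2$, where $s_3(m)$ is the base-$3$ digit sum; since $s_3(m)\le 2(\log_3 m+1)$, this gives $v_3(m!)\ge m/2-(\log_3 m+1)$, essentially linear in $m$. To compare the two bounds I relate $m$ and $n$ through sizes. Using the Binet-type estimate $a_n=C\alpha^n\bigl(1+o(1)\bigr)$, with $\alpha\approx 1.4656$ the real root of $x^3=x^2+1$, together with $m!=a_n^2-1$ and Stirling's approximation $\ln(m!)=m\ln m-m+O(\ln m)$, one obtains $m\ln m \asymp 2n\ln\alpha$, and therefore $m\gg n/\log n$.

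Combining the estimates yields $v_3(m!)\gg n/\log n$ against $v_3(a_n^2-1)=O(\log n)$; since these two quantities must be equal, this is impossible once $n$ is large. This bounds $n$, and hence $m$, effectively by some explicit threshold $n_0$. The remaining finitely many candidates are then settled by direct computation: for each $n\le n_0$ one checks whether $a_n^2-1$ is a factorial. As a sanity check, the three classical Brocard--Ramanujan solutions $m=4,5,7$ give $u=5,11,71$, none of which appears in the sequence $0,1,1,1,2,3,4,6,9,13,19,28,\dots$, so no solution survives.

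The main obstacle I anticipate is the middle step: making the interplay between $m$ and $n$ fully effective, that is, turning the asymptotic domination ``$v_3(m!)$ beats $v_3(a_n^2-1)$'' into an explicit $n_0$ small enough for the residual finite check to be feasible. This demands careful explicit constants in both the growth estimate for $a_n$ (a clean two-sided bound $\alpha^{n-c}\le a_n\le \alpha^{n}$ for a concrete $c$) and in the logarithmic upper bound for $v_3(a_n\pm 1)$ supplied by the valuation characterization, so that the threshold is manageable.
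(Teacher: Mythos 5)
Your proposal is correct and follows essentially the same route as the paper: factor $m!=(a_n-1)(a_n+1)$, bound $v_3(a_n\pm 1)$ logarithmically in $n$ via the valuation characterizations (the paper's Theorems \ref{thm3.5} and \ref{thm3.6}), play this against the essentially linear lower bound for $v_3(m!)$ from Legendre's formula, relate $m$ and $n$ through the growth estimate $\alpha^{n-3}\le a_n\le\alpha^{n-1}$, and finish the resulting finite range by computation (the paper gets $m\le 221$, $n\le 1386$). Your observation that $3$ divides at most one of $a_n\pm 1$, so the sum of valuations is really a maximum, is a harmless refinement that the paper does not use.
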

\section{PRELIMINARIES}
\begin{lem}
For any integer $m\geq 1$and prime $p$ , we have 
\begin{equation*}
 \frac{m}{p-1}-\left\lfloor\frac{\log m}{\log p}\right\rfloor -1 \leq v_{p}(m!)\leq \frac{m-1}{p-1}.
 \end{equation*}
 \end{lem}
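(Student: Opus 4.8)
The plan is to establish the standard two-sided estimate on the $p$-adic valuation of a factorial using Legendre's formula, which states that $v_p(m!) = \sum_{i\geq 1}\lfloor m/p^i\rfloor$. The upper bound is the easier direction: I would drop the floor functions to get $v_p(m!) \leq \sum_{i\geq 1} m/p^i = m/(p-1)$, but to obtain the sharper bound $(m-1)/(p-1)$ I would instead use the well-known identity $v_p(m!) = (m - s_p(m))/(p-1)$, where $s_p(m)$ denotes the sum of the base-$p$ digits of $m$. Since $m\geq 1$ forces $s_p(m)\geq 1$, this immediately yields $v_p(m!) = (m-s_p(m))/(p-1) \leq (m-1)/(p-1)$, which is exactly the right-hand inequality.

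For the lower bound I would work directly from Legendre's formula and truncate the sum. Writing $k = \lfloor \log m/\log p\rfloor$, so that $p^k \leq m < p^{k+1}$, the terms $\lfloor m/p^i\rfloor$ vanish for $i > k$, hence $v_p(m!) = \sum_{i=1}^{k}\lfloor m/p^i\rfloor$. Using the elementary inequality $\lfloor x\rfloor > x - 1$ on each of the $k$ summands gives
\begin{equation*}
v_p(m!) > \sum_{i=1}^{k}\left(\frac{m}{p^i} - 1\right) = m\sum_{i=1}^{k}\frac{1}{p^i} - k.
\end{equation*}
The finite geometric sum satisfies $\sum_{i=1}^{k} p^{-i} = (1 - p^{-k})/(p-1)$, and since $p^{-k} \leq 1$ I would bound $m\sum_{i=1}^{k}p^{-i}$ from below. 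The cleanest route is to note $\sum_{i\geq 1} p^{-i} = 1/(p-1)$ and that the discarded tail $\sum_{i > k} m/p^i$ is at most $1$ because $m < p^{k+1}$ makes $m/p^{k+1} < 1$ and the tail is a geometric series summing to $m/(p^{k+1}(1-1/p)) = m/(p^k(p-1)) \leq p/(p-1)$; I would then combine these to reach $m/(p-1) - k - 1 \leq v_p(m!)$, which upon substituting $k = \lfloor\log m/\log p\rfloor$ is precisely the left-hand inequality.

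The main obstacle is bookkeeping in the lower bound: the naive substitution of $\lfloor x\rfloor > x-1$ into the truncated sum produces $-k$ rather than the required $-k-1$, and one must account carefully for the geometric-series tail that was dropped when truncating at $i=k$ to recover the extra $-1$. I would handle this by being explicit that $v_p(m!) = \sum_{i=1}^{k}\lfloor m/p^i\rfloor > m/(p-1) - (k+1)$, where the full infinite geometric sum contributes $m/(p-1)$ and the combined error from the floor truncation and the dropped tail is bounded by $k+1$. No deep machinery is needed beyond Legendre's formula and the digit-sum identity; the estimate is entirely elementary and the only care required is in tracking the constants so that the displayed inequalities match exactly.
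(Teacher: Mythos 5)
The paper itself does not prove this lemma; it simply cites Guadalupe's paper, so there is no in-paper argument to compare against. Your upper bound is fine: the identity $v_p(m!)=(m-s_p(m))/(p-1)$ together with $s_p(m)\geq 1$ gives $(m-1)/(p-1)$ immediately.

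The lower bound, however, has a genuine gap. You assert that the discarded tail $\sum_{i>k} m/p^i$ is ``at most $1$,'' but your own computation in the same sentence shows it equals $m/(p^k(p-1))$, which is only bounded by $p/(p-1)>1$; for instance $p=2$, $m=3$, $k=1$ gives a tail of $3/2$. With the correct tail bound your chain only yields $v_p(m!)>\frac{m}{p-1}-k-\frac{p}{p-1}$, which is strictly weaker than the claimed $\frac{m}{p-1}-k-1$, and since $\frac{m}{p-1}-k-\frac{p}{p-1}$ need not be an integer you cannot simply round up. Indeed the strict inequality $v_p(m!)>\frac{m}{p-1}-k-1$ that your bookkeeping purports to deliver is actually false: for $p=3$, $m=8$ one has $v_3(8!)=2=\frac{8}{2}-1-1$ exactly, so equality occurs and the combined floor-plus-tail error really does reach $k+1$. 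The cleanest repair stays with your digit-sum identity: $m$ has exactly $k+1=\lfloor\log m/\log p\rfloor+1$ base-$p$ digits, each at most $p-1$, so $s_p(m)\leq (k+1)(p-1)$ and $v_p(m!)=\frac{m-s_p(m)}{p-1}\geq\frac{m}{p-1}-(k+1)$. Alternatively, replace $\lfloor m/p^i\rfloor>m/p^i-1$ by the sharper $\lfloor m/p^i\rfloor\geq\frac{m-(p^i-1)}{p^i}=\frac{m+1}{p^i}-1$ and use $m+1\leq p^{k+1}$, or observe that $v_p(m!)+k-\frac{m}{p-1}$ is an integer multiple of $\frac{1}{p-1}$ strictly exceeding $-\frac{p}{p-1}$, hence at least $-1$. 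Any of these closes the gap; as written, the step from the tail estimate to the final inequality does not follow.
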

 \begin{proof}
 See \cite{guadalupe20213}.
 \end{proof}
  \begin{lem}
 For all integers $n\geq 1$, we have $\alpha^{n-3} \leq a_{n} \leq \alpha^{n-1}$, where $\alpha$ is the real root of the characteristic polynomial $f(x)=x^{3}-x^{2}-1$ given by
 \begin{equation*}
 \alpha=\frac{1}{3}\left(1+\sqrt[3]{\frac{29-3\sqrt{93}}{2}}+\sqrt[3]{\frac{29+3\sqrt{93}}{2}}\right).
 \end{equation*}
 \end{lem}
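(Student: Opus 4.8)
The plan is to proceed by strong induction on $n$, exploiting the fact that the defining identity $\alpha^{3}=\alpha^{2}+1$ of the characteristic polynomial makes both inequalities collapse to exact relations in the inductive step. First I would dispose of the base cases $n=1,2,3$ by direct computation. Since $a_{1}=a_{2}=a_{3}=1$ and $\alpha\approx 1.4656>1$, one checks $\alpha^{-2}\leq 1\leq 1$, $\alpha^{-1}\leq 1\leq \alpha$, and $1\leq 1\leq \alpha^{2}$, so that both the lower and upper bounds hold for these three indices (with equality occurring at the upper endpoint when $n=1$ and at the lower endpoint when $n=3$).

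For the inductive step, fix $n\geq 4$ and assume $\alpha^{k-3}\leq a_{k}\leq \alpha^{k-1}$ for every integer $k$ with $1\leq k<n$. Because $n\geq 4$, both $n-1$ and $n-3$ lie in the range $\{1,\dots,n-1\}$, so the induction hypothesis applies to $a_{n-1}$ and $a_{n-3}$. Applying the recurrence $a_{n}=a_{n-1}+a_{n-3}$ together with the upper bounds gives
\[
a_{n}\leq \alpha^{n-2}+\alpha^{n-4}=\alpha^{n-4}\left(\alpha^{2}+1\right)=\alpha^{n-4}\cdot\alpha^{3}=\alpha^{n-1},
\]
where the crucial middle equality is exactly the characteristic relation $\alpha^{3}=\alpha^{2}+1$. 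In the same way the lower bounds yield
\[
a_{n}\geq \alpha^{n-4}+\alpha^{n-6}=\alpha^{n-6}\left(\alpha^{2}+1\right)=\alpha^{n-6}\cdot\alpha^{3}=\alpha^{n-3}.
\]
This closes the induction and establishes both bounds for all $n\geq 1$.

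There is no serious obstacle in this argument; its entire content is the single observation that substituting the recurrence turns each target inequality into a multiple of the characteristic identity $\alpha^{2}+1=\alpha^{3}$, so the stated bounds are in fact sharp up to the shift in exponent. The only point demanding slight care is ensuring that enough base cases are verified, namely $n=1,2,3$, so that the recurrence, which reaches back three indices, never refers to a term with index below $1$ once $n\geq 4$.
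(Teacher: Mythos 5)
Your proof is correct: the base cases $n=1,2,3$ are verified accurately, and the inductive step legitimately invokes the hypothesis for $n-1$ and $n-3$ (both at least $1$ once $n\geq 4$) and closes via the identity $\alpha^{3}=\alpha^{2}+1$. The paper itself offers no proof here, deferring entirely to the cited reference \cite{guadalupe20213}; your strong-induction argument is the standard one and is exactly what that reference supplies, so nothing further is needed.
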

 \begin{proof}
 See \cite{guadalupe20213}.
 \end{proof}
\begin{lem}\label{lem2.3}
For all integers $m\geq 3$ and $n\geq 0$ we have 
$$ a_{m+n}=a_{m-1}a_{n+2}+a_{m-3}a_{n+1}+a_{m-2}a_{n}.$$
\end{lem}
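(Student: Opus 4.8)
The plan is to prove the convolution identity
$$ a_{m+n}=a_{m-1}a_{n+2}+a_{m-3}a_{n+1}+a_{m-2}a_{n} $$
by induction on $n$, keeping $m\geq 3$ fixed (though arbitrary) throughout. First I would verify the base cases. Since the recurrence $a_k=a_{k-1}+a_{k-3}$ advances the index by one while reaching back three steps, and since the right-hand side involves the three ``consecutive'' shifts $a_{n+2},a_{n+1},a_n$, I expect the induction step to require three previously established cases. Accordingly I would check $n=0,1,2$ directly against the known initial values and the defining recurrence: for $n=0$ the claim reads $a_m=a_{m-1}a_2+a_{m-3}a_1+a_{m-2}a_0=a_{m-1}+a_{m-3}$, which is exactly the Narayana recurrence (valid since $m\geq 3$); the cases $n=1$ and $n=2$ reduce similarly to linear combinations of $a_{m-1},a_{m-2},a_{m-3}$ that one confirms collapse to $a_{m+1}$ and $a_{m+2}$ using the recurrence once more.

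For the inductive step, I would assume the identity holds for $n-3,n-2,n-1$ (with $n\geq 3$) and prove it for $n$. The key is to apply the recurrence in the index $n$ on both sides simultaneously. On the left, $a_{m+n}=a_{m+n-1}+a_{m+n-3}$. On the right, I would use the linearity of the target expression in its $n$-dependent factors, observing that each of $a_{n+2},a_{n+1},a_n$ individually satisfies the same three-term recurrence in $n$: namely $a_{n+2}=a_{n+1}+a_{n-1}$, $a_{n+1}=a_n+a_{n-2}$, and $a_n=a_{n-1}+a_{n-3}$. Substituting these into the right-hand side and regrouping the terms, the sum splits precisely into the right-hand sides of the identity evaluated at $n-1$ and at $n-3$, which by the induction hypothesis equal $a_{m+n-1}$ and $a_{m+n-3}$ respectively. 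Their sum is $a_{m+n}$, closing the induction.

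The main obstacle, and the only place where care is genuinely needed, is the index bookkeeping in the inductive step: one must ensure that when the recurrence is applied to $a_{n+2},a_{n+1},a_n$ the resulting indices $n+1,n-1,n,n-2,n-3$ all remain in the range where the recurrence and the induction hypothesis are valid, and that the regrouping matches the coefficients $a_{m-1},a_{m-3},a_{m-2}$ correctly across the two induction-hypothesis instances. This is purely mechanical once the alignment is set up, but it is easy to misplace a term, so I would write the three substitutions explicitly and collect coefficients of $a_{m-1}$, $a_{m-2}$, and $a_{m-3}$ separately to confirm each matches. An equivalent and perhaps cleaner route would be to note that both sides, as functions of $n$ for fixed $m$, satisfy the same linear recurrence $x_n=x_{n-1}+x_{n-3}$; since a solution of this third-order recurrence is determined by three consecutive values, agreement on $n=0,1,2$ forces agreement for all $n\geq 0$. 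I would likely present the explicit induction for concreteness but mention the recurrence-uniqueness viewpoint as the conceptual reason it works.
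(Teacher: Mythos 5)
Your proposed proof is correct: the base cases $n=0,1,2$ check out against the initial values and the recurrence, and in the inductive step the substitutions $a_{n+2}=a_{n+1}+a_{n-1}$, $a_{n+1}=a_n+a_{n-2}$, $a_n=a_{n-1}+a_{n-3}$ do regroup the right-hand side exactly into the instances at $n-1$ and $n-3$, whose sum is $a_{m+n-1}+a_{m+n-3}=a_{m+n}$. The paper itself gives no proof of this lemma, deferring entirely to the cited reference of Guadalupe, so your argument cannot be compared line by line with the paper; but it is the standard and natural argument for such convolution identities, and your closing observation --- that both sides satisfy the third-order recurrence $x_n=x_{n-1}+x_{n-3}$ in $n$ and agree at three consecutive values --- is the cleanest way to package it.
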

\begin{proof}
See \cite{guadalupe20213}.
\end{proof}
 \section{ Lemmata }
\begin{lem}\label{lemma3.1}
\begin{equation*}
v_3(a_i)=\left\{
           \begin{array}{ll}
             0,          & \hbox{$i \equiv 1,2,3,4,6 \mod 8$;} \\
             1,          & \hbox{$i \equiv 5,7,13,15 \mod 24$;} \\
             2,          & \hbox{$i \equiv 8 \mod 24$;} \\
             v_2(i+1)+1, & \hbox{$i \equiv 23 \mod 24$;} \\
             v_2(i+3)+1, & \hbox{$i \equiv 21 \mod 24$;} \\
             v_2(i)+2, & \hbox{$i \equiv 0 \mod 24$;} \\
             v_2(i+8)+2,& \hbox{$i \equiv 0 \mod 24$.} 
           \end{array}
         \right.
\end{equation*}
\end{lem}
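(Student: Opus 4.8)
The plan is to determine $v_3(a_i)$ by first reducing modulo successive powers of $3$ and then converting the eventual growth of the valuation into a lifting-the-exponent argument driven by the addition formula of Lemma \ref{lem2.3}.

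First I would compute the sequence modulo $3$. A direct calculation shows that $(a_i \bmod 3)$ is purely periodic with period $8$, namely $0,1,1,1,2,0,1,0$, so that $3\mid a_i$ precisely when $i\equiv 0,5,7\pmod 8$; this settles the case $v_3(a_i)=0$ for $i\equiv 1,2,3,4,6\pmod 8$. Refining the analysis, I would compute $(a_i\bmod 9)$ and $(a_i\bmod 27)$, whose periods turn out to be $24$ and $72$ respectively. Reading these two tables over one period isolates the bounded cases: the residues $i\equiv 5,7,13,15\pmod{24}$ give $v_3(a_i)=1$, and $i\equiv 8\pmod{24}$ gives $v_3(a_i)=2$, since in these classes the valuation does not rise at the next power of $3$.

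The remaining classes $i\equiv 0,16,21,23\pmod{24}$ are exactly those on which the valuation is unbounded, and here I would run a lifting-the-exponent induction. Applying Lemma \ref{lem2.3} with $m=24$ gives
\begin{equation*}
a_{n+24}=a_{23}\,a_{n+2}+a_{21}\,a_{n+1}+a_{22}\,a_n,
\end{equation*}
and reducing the explicit coefficients modulo $27$ yields
\begin{equation*}
a_{n+24}\equiv a_n+9\bigl(2a_{n+2}+a_{n+1}+a_n\bigr)\pmod{27}.
\end{equation*}
Thus translation of the index by one full period $24$ fixes the sequence modulo $9$ but perturbs it by a nonzero multiple of $9$ modulo $27$. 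Equivalently, writing $M$ for the companion matrix of the recurrence (so that $a_n$ is a fixed entry of $M^n$), one has $M^{24}=I+9C$ with $C\not\equiv 0\pmod 3$. Expanding $(I+9C)^{3^s}$ by the binomial theorem, every off-identity contribution carries $3$-adic valuation exactly $s+2$, so iterating the period-$24$ shift raises the valuation by exactly one each time the number of blocks acquires a further factor of $3$. Consequently, on each unbounded class the valuation equals a fixed constant plus the $3$-adic valuation of the associated multiple of $24$, namely the shifted index $i+1$, $i+3$, $i$, or $i+8$ that is $\equiv 0\pmod{24}$; the constants are then pinned down by evaluating the seeds $a_{23},a_{21},a_{24},a_{16}$ directly.

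The main obstacle is the hard core of this induction: verifying that $C\not\equiv 0\pmod 3$ in the specific entry representing $a_n$, and, more delicately, that the increment in valuation is exactly one rather than more, i.e.\ that no higher-order cancellation occurs in the relevant binomial contribution. This amounts to tracking the residue $2a_{n+2}+a_{n+1}+a_n\pmod 3$ along each progression and confirming it is a unit there, together with the routine but lengthy bookkeeping needed to treat the four shifted progressions uniformly and to match the base residues modulo $27$ with the chosen seeds.
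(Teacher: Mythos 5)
The paper itself offers no proof of Lemma \ref{lemma3.1}: it is imported wholesale from Guadalupe's paper, where it is obtained by index-shift inductions of the same flavour as Propositions \ref{prop3.3} and \ref{prop3.4}. Your companion-matrix/lifting-the-exponent route is therefore a genuinely independent strategy, and its skeleton is sound: the period-$8$ pattern $0,1,1,1,2,0,1,0$ of $a_i\bmod 3$ is correct; since $a_{23}\equiv 18$, $a_{21}\equiv 9$, $a_{22}\equiv 10\pmod{27}$, the identity $a_{n+24}\equiv a_n+9\left(2a_{n+2}+a_{n+1}+a_n\right)\pmod{27}$ is correct; and the binomial estimate $(I+9C)^{3^s}\equiv I+3^{s+2}C\pmod{3^{s+3}}$ holds because the $k$-th term has valuation at least $s-v_3(k)+2k>s+2$ for $k\ge 2$. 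What this buys over the paper's (outsourced) approach is a uniform explanation of why the valuation grows with $v_3$ of a shifted index, rather than a separate congruence computation for each residue class.

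There is, however, a concrete gap at exactly the step you single out as the hard core. Your plan requires the first-order coefficient --- the residue $2a_{i_0+2}+a_{i_0+1}+a_{i_0}\bmod 3$ at the seed of each unbounded progression --- to be a unit. This holds for $i\equiv 21,23\pmod{24}$ but fails for $i\equiv 0$ and $i\equiv 16\pmod{24}$: at $i_0=0$ one gets $2a_2+a_1+a_0=3\equiv 0\pmod 3$, and indeed $a_{24}=4023=3^3\cdot 149$ and $a_{16}=189=3^3\cdot 7$ have valuation $3$, not $2$. That extra factor of $3$ in the coefficient is precisely the source of the constant $+2$ (rather than $+1$) in those two cases, so the verification you propose would come out false in half the classes. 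The repair is available but must be made explicit: show the coefficient there has valuation exactly $1$ and note that the $k\ge 2$ binomial terms have valuation at least $v_3(s)+4$, so they still cannot cancel the first-order term. Finally, be aware that your argument, once completed, establishes the formula with $v_3$ of the shifted indices $i+1$, $i+3$, $i$, $i+8$; the statement as printed writes $v_2$ and lists $i\equiv 0\pmod{24}$ twice with incompatible formulas. These are evidently typographical errors (for instance $v_3(a_{23})=2=v_3(24)+1$, whereas $v_2(24)+1=4$), so you would be proving the intended, corrected statement, not the literal one.
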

\begin{proof}
See \cite{guadalupe20213}.
\end{proof}
\begin{cor} \label{cor3.2}
\hspace{2cm}
\begin{enumerate}
\item If $i\equiv 16,21 \mod 24$, then  $a_{i} \equiv 0 \mod 9$; 
\item If $i\equiv 7 \mod 24$, then $a_{i} \equiv 0 \mod 3$.
\end{enumerate}
\end{cor}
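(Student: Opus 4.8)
The plan is to read off each statement directly from the valuation formula in Lemma \ref{lemma3.1}, since the corollary only asserts divisibility, i.e.\ lower bounds on $v_3(a_i)$, for three specific residue classes modulo $24$. For part (2), the case $i \equiv 7 \mod 24$ falls under the branch giving $v_3(a_i) = 1$; hence $3 \mid a_i$, which is exactly the claim, and no further work is needed.

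For part (1) I would treat the two residue classes separately. When $i \equiv 21 \mod 24$, Lemma \ref{lemma3.1} gives $v_3(a_i) = v_2(i+3)+1$. Since $i \equiv 21 \mod 24$ forces $i+3 \equiv 0 \mod 24$, in particular $8 \mid i+3$, so $v_2(i+3) \geq 3$ and therefore $v_3(a_i) \geq 4 \geq 2$, i.e.\ $9 \mid a_i$. When $i \equiv 16 \mod 24$, the relevant branch of Lemma \ref{lemma3.1} yields $v_3(a_i) = v_2(i+8)+2$; since $v_2(i+8) \geq 0$ this already gives $v_3(a_i) \geq 2$, so $9 \mid a_i$. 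One may note that $i \equiv 16 \mod 24$ also forces $i+8 \equiv 0 \mod 24$, whence $v_2(i+8) \geq 3$, but only $v_3(a_i) \geq 2$ is needed for the conclusion.

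The only genuine subtlety is bookkeeping: one must match each residue class modulo $24$ to the correct branch of Lemma \ref{lemma3.1} and confirm that the displayed cases are exhaustive on these classes. In particular the final case of the lemma, as printed for ``$i \equiv 0 \mod 24$'', must be read as the branch applying to $i \equiv 16 \mod 24$ in order to cover that residue, and this is precisely the branch used above. Once the correct branch is identified in each case, the stated divisibility is immediate, so I do not expect any real obstacle beyond this classification.
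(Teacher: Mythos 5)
Your proof is correct and takes exactly the paper's approach — the paper's entire proof is the assertion that the corollary is straightforward from Lemma \ref{lemma3.1} — and you rightly resolve the duplicated ``$i \equiv 0 \bmod 24$'' branch by reading the $v_2(i+8)+2$ case as the one covering $i \equiv 16 \bmod 24$. One caveat: the $v_2$'s in Lemma \ref{lemma3.1} appear to be typos for $v_3$ (e.g.\ $a_{21}=1278=2\cdot 3^2\cdot 71$ has $v_3=2$, not $v_2(24)+1=4$), so your intermediate claim $v_3(a_i)\geq 4$ for $i\equiv 21 \bmod 24$ is not actually true, but the needed conclusion $v_3(a_i)\geq 2$ survives under either reading since $3\mid i+3$ there, and the $i\equiv 16$ and $i\equiv 7$ cases are likewise unaffected.
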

\begin{proof}
The proof is straight forward from the previous lemma \ref{lemma3.1}.
\end{proof}
\begin{prop}\label{prop3.3}
For all integers $s\geq 1 $ and $n\geq 1$ we have
\begin{equation}\label{eqn2} 
\begin{split}
 a_{_{8s3^{n}}}\     &\equiv\ 3^{n+2}\cdot2s \mod 3^{n+3};\\
a_{_{8s3^{n}+1}}\    &\equiv\  3^{n+2}\cdot2s+3^{n+1}\cdot s+1 \mod 3^{n+3};\\
a_{_8s3^{n}+2}\       &\equiv\ 3^{n+2}\cdot2s+1  \mod 3^{n+3}.
\end{split}
\end{equation}
\end{prop}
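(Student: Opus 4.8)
The plan is to argue by induction on $n$, establishing the three congruences simultaneously and uniformly in $s$. Throughout write $N=8s\cdot 3^{n}$ and abbreviate $A=a_{N}$, $B=a_{N+1}$, $C=a_{N+2}$; for $n\ge 1$ we have $N\equiv 0 \pmod{24}$, so the valuation data of Lemma \ref{lemma3.1} and Corollary \ref{cor3.2} are available. For the base case $n=1$ I would first check $s=1$ directly, $a_{24}\equiv 54$, $a_{25}\equiv 64$, $a_{26}\equiv 55 \pmod{81}$, and then run a secondary induction on $s$: applying Lemma \ref{lem2.3} with $m=24s$ and second index $24+j$ ($j\in\{0,1,2\}$) expresses $a_{24(s+1)+j}$ through $a_{24s-1},a_{24s-2},a_{24s-3}$ and the fixed constants $a_{24+i}$, which reduces the claim at $s+1$ to the claim at $s$ modulo $81$.

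For the inductive step $n\to n+1$ the task is to pass from the triple at $N$ to the triple at $3N=8s\cdot 3^{n+1}$. From the basic recurrence I would first record $a_{N-1}=C-B$, $a_{N-2}=B-A$, $a_{N-3}=A-C+B$, $a_{N+3}=C+A$, $a_{N+4}=C+A+B$, so that all nearby terms are linear forms in $A,B,C$. Applying Lemma \ref{lem2.3} with $m=N$ and second index $N+j$ yields the doubling identities
\begin{align*}
a_{2N}&=(B-C)^{2}+A(2B-A),\\
a_{2N+1}&=B^{2}+2A(C-B),\\
a_{2N+2}&=B(2C-B)+A^{2},
\end{align*}
and the recurrence then gives $a_{2N-1},a_{2N-2},a_{2N-3}$. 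A second application of Lemma \ref{lem2.3} with $m=2N$ and second index $N+j$ expresses $a_{3N},a_{3N+1},a_{3N+2}$ as explicit cubic forms in $A,B,C$, which I would reduce modulo $3^{n+4}$.

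The reduction is governed by the valuations from the hypothesis: $v_{3}(A)=n+2$, while $B,C$ are units, so that $a_{N-1}\equiv -s\cdot 3^{n+1}$ and $a_{N-3}\equiv 2s\cdot 3^{n+2}+s\cdot 3^{n+1}\pmod{3^{n+3}}$ both have valuation $n+1$. The clean way to see that the hypothesis modulo $3^{n+3}$ is already enough is to package the three identities through the companion matrix $M$ of the recurrence, for which $M^{3N}=(M^{N})^{3}$. The level-$n$ congruences, together with the recurrence, give $M^{N}\equiv I \pmod{3^{\,n+1}}$, so writing $M^{N}=I+X$ one has $v_{3}(X)\ge n+1$ and
\[
(I+X)^{3}=I+3X+3X^{2}+X^{3}\equiv I+3X \pmod{3^{\,n+4}}
\]
for $n\ge 1$, since $v_{3}(3X^{2})\ge 2n+3\ge n+4$ and $v_{3}(X^{3})\ge 3n+3\ge n+4$. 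Thus the tripling is affine modulo $3^{n+4}$ and is determined by $X$ modulo $3^{n+3}$, i.e. by exactly the data the hypothesis supplies. This is also what makes the cubic forms coming from the double application of Lemma \ref{lem2.3} collapse: every genuinely nonlinear contribution acquires an extra factor of $3$ and falls away modulo $3^{n+4}$. Substituting $A\equiv 2s\cdot 3^{n+2}$, $B\equiv 2s\cdot 3^{n+2}+s\cdot 3^{n+1}+1$, $C\equiv 2s\cdot 3^{n+2}+1 \pmod{3^{n+3}}$ and collecting the surviving linear terms should reproduce the asserted congruences at level $n+1$, closing the induction.

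I expect the main obstacle to be precisely this precision bookkeeping. Naively the answer is wanted modulo $3^{n+4}$, one digit beyond the hypothesis, and the intermediate terms $a_{2N-1},a_{2N-3}$ have valuation only $n+1$, so that in the expression for $a_{3N}$ two orders of cancellation must occur before the valuation-$(n+3)$ answer emerges. The technical heart is to verify that these cancellations are exact, equivalently that the unknown leading $3$-adic digits of $A,B,C$ (those not fixed by the hypothesis) drop out modulo $3^{n+4}$; the cubing identity above is what guarantees this, and confirming it term by term in the cubic forms is the delicate part of the argument.
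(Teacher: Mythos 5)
Your proposal is correct, but a direct comparison with "the paper's proof" is not quite possible: the paper proves Proposition \ref{prop3.3} only by citation to \cite{guadalupe20213}. The closest internal benchmark is the paper's proof of the companion Proposition \ref{prop3.4}, and there your computational core coincides with it: a base case settled by a secondary induction on $s$ via Lemma \ref{lem2.3}, followed by a doubling step ($a_{2N+j}$ as quadratic forms in $A,B,C$ -- your three identities are correct) and a second application of Lemma \ref{lem2.3} to reach $a_{3N+j}$. What you add, and what the paper leaves implicit, is the justification that a hypothesis modulo $3^{n+3}$ legitimately produces a conclusion modulo $3^{n+4}$: writing $M^{N}=I+X$ with $M^{N}\equiv I\pmod{3^{n+1}}$ (which does hold, since the entries of $M^{N}$ are $a_{N-3},a_{N-2},a_{N-1},a_{N},a_{N+1}$ and these reduce to those of $I$), the identity $(I+X)^{3}\equiv I+3X\pmod{3^{n+4}}$ for $n\ge 1$ isolates exactly which cancellations must occur in the cubic forms, rather than trusting them to emerge from the expansion. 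This packaging also closes your one deferred step: with $u=s\cdot 3^{n+1}$ one gets $X\equiv uY\pmod{3^{n+3}}$ for a fixed integer matrix $Y$ with $Y(0,1,1)^{T}=(6,7,6)^{T}$, whence $(a_{3N},a_{3N+1},a_{3N+2})\equiv(18u,\,21u+1,\,18u+1)=(2s\cdot 3^{n+3},\,2s\cdot 3^{n+3}+s\cdot 3^{n+2}+1,\,2s\cdot 3^{n+3}+1)\pmod{3^{n+4}}$, which is precisely the statement at level $n+1$. Two small caveats: your remark $v_{3}(A)=n+2$ holds only when $3\nmid s$ (the proposition is asserted for all $s\ge 1$), though the argument only needs $v_{3}(X)\ge n+1$, which is unconditional; and the base case $n=1$ still requires actually carrying out the finite computation with the constants $a_{24},\dots,a_{28}$ modulo $81$, which does check out.
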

\begin{proof}
See \cite{guadalupe20213}.
\end{proof}
\begin{prop}\label{prop3.4}
For all integers $s\geq 1 $ and $n\geq 2$ we have 
\begin{equation}\label{eqn3}
\begin{split}
 a_{_{8s3^{n}}}    &\equiv\ 3^{n+3}\cdot2s+3^{n+2}\cdot2s  \mod 3^{n+4};\\
a_{_{8s3^{n}+1}}   & \equiv\ 3^{n+2}\cdot5s+3^{n+1}\cdot s+1  \mod 3^{n+4};\\
a_{_{8s3^{n}+2}}   &\equiv\ 3^{n+3}\cdot2s+3^{n+2}\cdot5s+1  \mod 3^{n+4}.
\end{split}
\end{equation}
\end{prop}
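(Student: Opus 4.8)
The plan is to prove Proposition~\ref{prop3.4} by strengthening the congruences of Proposition~\ref{prop3.3} one $3$-adic level deeper, using the addition formula of Lemma~\ref{lem2.3} as the engine. The natural idea is to induct on $n\ge 2$. For the base case $n=2$ one would verify the three congruences modulo $3^{6}=729$ directly; since the indices $8s\cdot 9=72s$, $72s+1$, $72s+2$ run over a residue class, it suffices to pin down $a_{72}, a_{73}, a_{74}$ modulo $729$ together with the linear dependence on $s$, which can be extracted by writing $a_{72s}$ via repeated application of Lemma~\ref{lem2.3} and tracking the coefficient of $s$.

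For the inductive step the key observation is a tripling relation: to pass from index $8s3^{n}$ to $8s3^{n+1}=8(3s)3^{n}$, one sets $m+n'$ in Lemma~\ref{lem2.3} so that both the ``big'' index $m$ and the ``small'' index $n'$ are multiples of $8s3^{n}$. Concretely I would apply Lemma~\ref{lem2.3} with $m$ and the shift chosen to express $a_{8s3^{n+1}}$, $a_{8s3^{n+1}+1}$, $a_{8s3^{n+1}+2}$ as bilinear combinations of the three quantities $a_{8s3^{n}}, a_{8s3^{n}+1}, a_{8s3^{n}+2}$ (and their neighbours $a_{8s3^{n}-1}$ etc.). Substituting the Proposition~\ref{prop3.4} congruences for the inputs and reducing modulo $3^{n+5}$, the cross terms of the form $3^{n+2}\cdot 3^{n+2}=3^{2n+4}$ vanish because $2n+4\ge n+5$ for $n\ge 1$, so only the linear-in-$3^{n+2}$ contributions survive and one reads off the coefficients $2s, 5s, \dots$ at the next level. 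This is precisely the mechanism that produced Proposition~\ref{prop3.3}, now carried to one higher power of $3$.

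The main obstacle I anticipate is bookkeeping rather than conceptual: the addition formula mixes three consecutive Narayana values, so to close the induction I must simultaneously control \emph{three} congruences (at indices $\equiv 0,1,2$) and possibly also the values at indices $8s3^{n}-1$ and $8s3^{n}+3$, which are not directly listed in the proposition. I would therefore first establish, as auxiliary congruences modulo $3^{n+4}$, the residues of $a_{8s3^{n}-1}$ and $a_{8s3^{n}+3}$ in terms of $s$ — deriving them from the recurrence $a_{k}=a_{k-1}+a_{k-3}$ applied to the three known residues — so that Lemma~\ref{lem2.3} can be invoked with all its inputs known. The delicate point is ensuring the modulus chosen at each stage is high enough that the surviving linear terms are correct yet low enough that the quadratic cross terms genuinely drop out; getting the exponent arithmetic $2n+4$ versus $n+4$ exactly right is where an off-by-one error would most easily creep in, so I would treat those inequalities explicitly.

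Finally, I would package the three resulting congruences into the stated form, checking that the coefficients $3^{n+3}\cdot 2s$, $3^{n+2}\cdot 5s$, etc., match both the base case and the recursion, thereby completing the induction. An alternative to the explicit induction would be to feed the lifting-the-exponent philosophy already implicit in Lemma~\ref{lemma3.1}: since that lemma determines $v_3(a_i)$ exactly, one could in principle deduce the leading $3$-adic term and then compute only the next digit; but because Proposition~\ref{prop3.4} asks for two-term precision with explicit linear dependence on $s$, the direct addition-formula induction sketched above seems the cleaner route, and it reuses the computation already performed for Proposition~\ref{prop3.3} almost verbatim.
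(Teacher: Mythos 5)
Your plan matches the paper's proof in all essentials: the paper also inducts on $n$ with the $n=2$ base case handled by a secondary induction on $s$ via Lemma \ref{lem2.3}, derives the needed neighbouring residues $a_{8s3^{n-1}\cdot 1-1}$ and $a_{8s3^{n-1}-2}$ from the recurrence (introducing integer constants $c_0,c_1,c_2$ for the unknown higher-order parts), and then lifts from index $8s3^{n-1}$ to $8s3^{n}$ by the addition formula — doubling first and then adding once, rather than tripling in a single application, but that is the same mechanism you describe. One caution on your cross-term estimate: the lowest-order terms in the input congruences are $3^{n}\cdot s$ and $3^{n+1}\cdot s$, not $3^{n+2}\cdot s$, so the quadratic cross terms go down to $3^{2n}$ rather than $3^{2n+4}$ and do not automatically vanish modulo the target modulus for small $n$; the paper carries them explicitly through the expansion, which is precisely the delicate bookkeeping you flag.
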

\begin{proof}
 We going to prove this theorem using the principle mathematical induction  on $n$. At $n=2$ we want to prove the following :
\begin{equation}\label{eqn4}
\begin{split}
 a_{_{72s}}    &\equiv\ 3^{4}\cdot8s  \mod 3^{6};\\
a_{_{72s+1}}   & \equiv\ 3^{3}\cdot16s+1  \mod 3^{6};\\
a_{_{72s+2}}   &\equiv\ 3^{4}\cdot11s+1  \mod 3^{6}.
\end{split}
\end{equation}
We can prove them by using the principle of mathematical induction on $s$. At $s = 1$, we have
\begin{eqnarray*}
374009739309 =a_{_{72}}\  &\equiv&   648 \mod 3^{6}; \\ 
548137914373 =a_{_{73}}\  &\equiv&   433 \mod 3^{6};\\
803335158406 =a_{_{74}}\  &\equiv&    163 \mod 3^{6}. 
\end{eqnarray*}
which prove the initial step. Now, Assume that the congruences are true at $s-1$ and we want to prove them at $s$. Using the inductive hypothesis on $s-1$, the defnition of the Narayana numbers and lemma  \ref{lem2.3}, one can deduce the following:
\begin{eqnarray*}
a_{_{72s}} &=& a_{_{72+72(s-1)}}=a_{_{71}}a_{_{72(s-1)+2}}             +a_{_{69}}a_{_{72(s-1)+1}}+a_{_{70}}a_{_{72(s-1)}}\\
          &\equiv& 459 \left( 3^{5}\cdot 2(s-1)+3^{4}\cdot 5(s-1)+1\right)
           + 189 \left(3^{4}\cdot 5(s-1)+3^{3}\cdot (s-1)+1\right)\\
           &&+514 \left(3^{5}\cdot 2(s-1)+3^{4}\cdot 2(s-1)\right) \mod 729\\ 
           &\equiv& 3^{4}\cdot 8s  \mod 729.
\end{eqnarray*}
In the same manner, one can deduce the following:
\begin{eqnarray*}
a_{_{72s+1}}  &\equiv& 3^{3}\cdot16s +1 \mod 729;\\
a_{_{72s+2}} &\equiv& 3^{4}\cdot 11s +1   \mod 729.
\end{eqnarray*}
\\ Thus the congruences (\ref{eqn4}) are true for $s\geq 1$ and $n=2$. Given $s \geq 1$ and $n\geq 2$, assume the congruences (\ref{eqn3}) are true for $n-1$ and we want to prove them at $n$. 
Using the inductive hypothesis and the definition of the Narayana numbers, one can deduce the following:
\begin{eqnarray*}
a_{_{3^{n-1}\cdot8s}}&=&3^{n+2}\cdot2s+3^{n+1}\cdot2s+c_{0}\cdot3^{n+3};\\
a_{_{3^{n-1}\cdot8s+1}}&=&3^{n+1}\cdot5s+3^{n}\cdot s+1+3^{n+3}\cdot c_{1};\\
a_{_{3^{n-1}\cdot8s+2}}&=&3^{n+2}\cdot2s+3^{n+1}\cdot5s+1+3^{n+3}\cdot c_{2};\\
a_{_{3^{n-1}\cdot8s-2}}&=&-3^{n+2}\cdot s+3^{n}\cdot s+1+\left(c_{1}-c_{0}\right)3^{n+3};\\
a_{_{3^{n-1}\cdot8s-1}}&=&3^{n+2}\cdot2s-3^{n}\cdot s+3^{n+3}\left(c_{2}-c_{1}\right).
\end{eqnarray*}
where $c_{0}, c_{1}, c_{2}$ are integers. Using Lemma \ref{lem2.3} and the previous relations, we have
\begin{eqnarray*}
a_{_{2(3^{n-1}\cdot8s)}}&=&a_{_{(3^{n-1}\cdot8s+1)+(3^{n-1}\cdot8s-1)}}\\
&=& a_{_{3^{n-1}\cdot8s}}a_{_{3^{n-1}\cdot8s+1}}+a_{_{3^{n-1}\cdot8s-2}}a_{_{3^{n-1}\cdot8s}}+a_{_{3^{n-1}\cdot8s-1}}a_{_{3^{n-1}\cdot8s-1}}\\
&\equiv & (3^{n+2}\cdot4s+3^{n+3}\cdot2c_{0}+3^{n+1}\cdot4s) \mod 3^{n+4}.\\
\end{eqnarray*}
In the same manner, one can deduce the following:
\begin{eqnarray*}
a_{_{2(3^{n-1}\cdot8s)+1}}&\equiv& 1+3^{n+1}\cdot10s+3^{n}\cdot2s+3^{n+3}\cdot2c_{1} \mod 3^{n+4};\\
a_{_{2(3^{n-1}\cdot8s)+2}}&\equiv & 1+3^{n+2}\cdot4s+3^{n+1}\cdot10s+3^{n+3}\cdot2c_{2} \mod 3^{n+4}.
\end{eqnarray*}
Consequently, 
\begin{eqnarray*}
a_{_{3^{n}\cdot8s}}&=&a_{_{3^{n-1}\cdot8s+2(3^{n-1}\cdot8s)}}=a_{_{3^{n-1}\cdot8s-1}}a_{_{2(3^{n-1}\cdot8s)+2}}+(a_{_{3^{n-1}}\cdot8s}-a_{_{3^{n-1}\cdot8s-1}})a_{_{2(3^{n-1}\cdot8s)+1}}+a_{_{3^{n-1}\cdot8s-2}}a_{_{2(3^{n-1}\cdot8s)}}\\
&\equiv& \left(3^{n+2}\cdot2s-3^{n}\cdot s+\left(c_{2}-c_{1}\right)3^{n+3}\right)\left(1+3^{n+2}\cdot4s+3^{n+1}\cdot10s+3^{n+3}\cdot2c_{2}\right)\\
&&+\left(3^{n+2}\cdot2s+3^{n+1}\cdot2s+c_{0}\cdot3^{n+3}-3^{n+2}\cdot2s+3^{n}\cdot s+\left(c_{1}-c_{2}\right)3^{n+3}\right)\left(1+3^{n+1}\cdot 10s+3^{n}\cdot2s+2c_{1}\cdot3^{n+3}\right)\\
&&+\left(-3^{n+2}\cdot s+3^{n}\cdot s+1+\left(c_{1}-c_{0}\right)3^{n+3}\right)\left(3^{n+2}\cdot4s+2c_{0}\cdot3^{n+3}+3^{n+1}\cdot4s\right) \mod 3^{n+4}\\
&\equiv & 3^{n+3}\cdot 2s+3^{n+2}\cdot 2s \mod 3^{n+4}.
\end{eqnarray*}
In the same manner, one can deduce the following:
\begin{eqnarray*}
a_{_{3^{n}\cdot8s+1}}&\equiv& 3^{n+2}\cdot5s+3^{n+1}\cdot s+1 \mod 3^{n+4};\\
a_{_{3^{n}\cdot8s+2}} &\equiv & 3^{n+2}\cdot5s+3^{n+3}\cdot2s+1 \mod 3^{n+4}.
\end{eqnarray*}
\end{proof}
\begin{thm}\label{thm3.5}
For all $i\geq 1$, we have
\begin{equation*}
  v_{3}(a_{i}-1)= \left\{
                  \begin{array}{ll}
                0 ,    & \hbox{$i\equiv0,4,5,7\mod 8$;}\\
             v_{3}(i-1)+1 ,  &  \hbox{$ i\equiv1 \mod 8$;}\\
             v_{3}(i+2)+1 ,  &  \hbox {$ i\equiv6 \mod 8$;}\\
             v_{3}(i-2)+2 ,   &  \hbox{$  i\equiv2\mod 24$;} \\
             2 ,               &  \hbox{$ i\equiv10 \mod 24$;} \\
             v_{3} (i+6)(i+30)+2 , &  \hbox{$i\equiv18 \mod24$;}\\
             v_{3}(i-3)+2 ,    &\hbox{$ i\equiv3\mod24$;} \\
             v_{3}(i+13)+2 ,   &\hbox{$i\equiv11 \mod 24$;} \\
             v_{3}(i+5)+2 ,   &\hbox{$i\equiv19\mod 24$.} 
             \end{array}
             \right.
\end{equation*}
\end{thm}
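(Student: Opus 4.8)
The plan is to read off $v_3(a_i-1)$ as the largest $k$ for which $a_i\equiv 1\pmod{3^k}$, exploiting that $\{a_n \bmod 3^k\}$ is purely periodic (as $\{a_n\}$ is linearly recurrent). A short computation gives period $8$ modulo $3$ and period $24$ modulo $9$, with
\[
a_i\equiv 1\pmod 3 \iff i\equiv 1,2,3,6\pmod 8 .
\]
Thus for $i\equiv 0,4,5,7\pmod 8$ one has $a_i\not\equiv 1\pmod 3$ (indeed $a_i\equiv 2$ when $i\equiv 4$, while $v_3(a_i)\geq 1$ when $i\equiv 0,5,7$ by Lemma \ref{lemma3.1}), so $v_3(a_i-1)=0$; this disposes of the first branch. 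The remaining classes $i\equiv 1,2,3,6\pmod 8$, where $v_3(a_i-1)\geq 1$, I would handle by feeding the appropriate $3$-adic shift into Propositions \ref{prop3.3} and \ref{prop3.4}.

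I would start with the ``shift $+1$'' branches $i\equiv 1,6\pmod 8$. If $i\equiv 1\pmod 8$ and $v_3(i-1)=n\geq 1$, write $i-1=8s\cdot 3^{n}$ with $\gcd(s,3)=1$; Proposition \ref{prop3.3} then gives $a_i-1\equiv 3^{n+2}\cdot 2s+3^{n+1}s=7s\cdot 3^{n+1}\pmod{3^{n+3}}$, and since $3\nmid 7s$ we obtain $v_3(a_i-1)=n+1=v_3(i-1)+1$. The residual case $v_3(i-1)=0$ (that is, $i\equiv 9,17\pmod{24}$) is the base step and is checked directly from the period-$24$ table modulo $9$. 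The branch $i\equiv 6\pmod 8$ runs identically once one records a congruence for $a_{8s3^{n}-2}$, precisely of the type produced inside the proof of Proposition \ref{prop3.4}, yielding $v_3(a_i-1)=v_3(i+2)+1$.

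For the ``shift $+2$'' branches I would split $i\equiv 2,3\pmod 8$ according to $i\bmod 24$. When $3\mid(i-2)$ or $3\mid(i-3)$, i.e. $i\equiv 2$ or $3\pmod{24}$, Proposition \ref{prop3.3} together with the recurrence $a_{8s3^{n}+3}=a_{8s3^{n}+2}+a_{8s3^{n}}$ gives $a_i-1\equiv 3^{n+2}\cdot 2s$ and $a_i-1\equiv 3^{n+2}\cdot 4s$ modulo $3^{n+3}$ respectively, whence $v_3(a_i-1)=v_3(i-2)+2$ and $v_3(i-3)+2$. The class $i\equiv 10\pmod{24}$ has the shift prime to $3$, so the value is the constant $2$, a finite verification modulo $27$.

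The hard part will be the deeper classes $i\equiv 11,18,19\pmod{24}$, where the valuation is at least $3$ and the relevant $3$-adic anchor $8s\cdot 3^{n}$ sits far from $i$; here I expect to need the sharper congruences of Proposition \ref{prop3.4} (modulo $3^{n+4}$), transported to the required index by repeated use of the addition formula of Lemma \ref{lem2.3} (e.g. relating $a_i$ to $a_{8s3^{n}-13}$ for $i\equiv 11$). The genuinely hardest is $i\equiv 18\pmod{24}$, whose answer is the \emph{product} $v_3\big((i+6)(i+30)\big)+2$: writing $i=18+24k$ one has $i+6=24(k+1)$ and $i+30=24(k+2)$, two consecutive anchors that are $3$-adically independent (they differ by $24$, of valuation $1$), so exactly one can absorb extra powers of $3$ and the valuation is $\geq 4$---already visible in $a_{18}-1=405=3^4\cdot 5$. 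Obtaining the factored form requires expanding $a_i-1$ to enough $3$-adic precision that it reads as a unit times $(k+1)(k+2)\cdot 3^{4}$, and the main obstacle is the bookkeeping of which of the two neighboring anchors carries the surplus valuation through the doubling step; once that is controlled, every branch collapses to the substitutions above together with the finite base checks modulo $9,27,81$.
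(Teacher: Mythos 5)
Your plan follows essentially the same route as the paper: reduce to residue classes modulo $8$ and $24$, settle the constant-valuation classes by a finite/inductive check modulo small powers of $3$, and for the remaining classes write the shifted index as $8s\cdot 3^{n}$, feed it into Propositions \ref{prop3.3} and \ref{prop3.4}, and transport back to $i$ via the recurrence and Lemma \ref{lem2.3}. The cases you defer ($i\equiv 11,18,19\bmod 24$) are handled in the paper by exactly the mechanism you anticipate---a telescoping chain of the recurrence down from the anchor $8s\cdot 3^{n}$ combined with the modulo-$3^{n+4}$ congruences, split according to $k\bmod 3$---and the computations you do carry out (e.g. $7s\cdot 3^{n+1}$ for $i\equiv 1\bmod 8$, and the use of Lemma \ref{lemma3.1} for $i\equiv 0,5,7\bmod 8$) are, if anything, slightly more careful than the paper's.
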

\begin{proof}
\underline{Case(1):} $ i \equiv {0,4,5,7} \mod 8$.
\\Subcase(1):  $ i \equiv 0 \mod 8$. We are going to prove that $ v_{3}(a_{i}-1)=0$ using Principle of Mathematical Induction. At $ k=0$, we have $a_0-1 \not\equiv 0 \mod 3$. Now, Assume that $ a_{8k} -1 \not\equiv 0 \mod 3 $ and we want to prove that  $a_{8(k+1)}-1 \not\equiv 0 \mod 3 $. Using lemma \ref{lem2.3}, we have
\begin{eqnarray*}
a_{_{8(k+1)}}-1&=&a_{_{8k+8}}-1=a_{_{7}}a_{_{8k+2}}+a_{_{5}}a_{_{8k+1}}+a_{_{6}}a_{_{8k}}-1\\
 &\equiv& a_{_{8k}}-1 \mod3\\
 &\not\equiv& 0 \mod 3.
 \end{eqnarray*}
 The other subcases can be done in the same way.
\\\underline{Case(2):} $i\equiv 1 \mod 8.$ In this case we have $i-1=3^{n}\cdot 8s$ where $n\geq 1$ and $3\not|\;s $. Using Proposition \ref{prop3.3}, we have 
\begin{eqnarray*}
a_{_{i}}-1&=& a_{_{3^{n} \cdot 8s+1}}-1 \\
&\equiv& 1+3^{n+1} \cdot s+3^{n+2} \cdot 2s-1 \mod 3^{n+3}\\
&\equiv& 3^{n+1} \cdot s \mod 3^{n+3}.
\end{eqnarray*}
Therefore, $v_{3}(a_{i}-1)=n+1=v_{3}(i-1)+1$.
\\\underline{Case(5):}  $ i \equiv 10 \mod 24$. We are going to prove that $ v_{3}(a_{i}-1)=2$ using the principle mathematical induction . At $ k=10$, we have $v_{3}(a_{10}-1)=2$ .
Now, Assume that $  a_{24k+10}-1$  and we want to prove that  $9 ||a_{24(k+1)+10}-1$ . Using Lemma \ref{lem2.3}, we have
 \begin{eqnarray*}
a_{24(k+1)+10}-1 &=& a_{(24k+10)+24}-1=a_{23}a_{24k+12}+a_{21}a_{24k+11}+a_{22}a_{24k+10}-1\\
&\equiv& (a_{24k+10}-1) \mod 9 \\
 &\equiv& 0 \mod 9.
 \end{eqnarray*}
 But, Using Corollary \ref{cor3.2}
 \begin{eqnarray*}
 a_{24(k+1)+10}-1 &\equiv& 9\left(a_{24k+12}+a_{24k+12}+a_{24k+11}+a_{24k+10}\right)+a_{24k+10}-1 \mod 27 \\
 &\equiv& 9(3a_{24k+11}+3a_{24k+9}+a_{24k+7})+a_{24k+10}-1 \mod 27 \\
 &\equiv& a_{24k+10}-1 \mod 27\\
 &\not \equiv & 0 \mod 27.
 \end{eqnarray*}
 Therefore, $v_{3}(a_{i}-1)=2.$
\\\underline{Case(6):} $ i \equiv 18\mod 24$. We want to prove that:
\begin{eqnarray*}
v_{3}(a_{24k+18}-1)&=&v_{3}\left((24k+24)(24k+48)\right)+2\\
&=& v_{3}\left(24^2 (k+1)(k+2)\right)+2\\
&=&v_{3}\left((k+1)(k+2)\right)+4.
\end{eqnarray*}
\\Subcase(1):  $k\equiv 0 \mod 3 .$ We are going to prove that $ v_{3}(a_{24k+18}-1)=4$ using the principle mathematical induction . At $ k=0$, we have $a_{18}-1 \equiv 0 \mod 81$ and $a_{18}-1 \not\equiv 0 \mod 243$. Now, Assume that $ a_{72k+18}-1 \equiv 0 \mod 81$ and $a_{72k+18}-1 \not\equiv 0 \mod 243 $ and we want to prove that $a_{72(k+1)+18}-1 \equiv 0 \mod 81$ and $a_{72(k+1)+18}-1 \not\equiv 0 \mod 243  .$ Using lemma \ref{lem2.3}, we have
\begin{eqnarray*}
a_{72(k+1)+18}-1&=&a_{(72k+18)+72}-1=a_{71}a_{72k+20}+a_{69}a_{72k+19}+a_{70}a_{72k+18}-1 \\
&\equiv& 27(2a_{72k+20}+a_{72k+19}+a_{72k+18})+a_{72k+18}-1 \mod 81\\
&\equiv& a_{72k+18}-1 \mod 81 \\
&\equiv& 0 \mod 81.
\end{eqnarray*}
and  
\begin{eqnarray*}
a_{72(k+1)+18}-1&=&27\left(8a_{72k+20}+7a_{72k+19}+a_{72k+18}\right)+a_{72k+18}-1 \mod 243\\
&=&27\left(9a_{72k+20}-a_{72k+20}+7a_{72k+19}+a_{72k+18}\right)\\
&=&27\left(9a_{72k+20}+7a_{72k+16}+9a_{72k+18}-a_{72k+21}\right)+a_{72k+18}-1\mod 243\\
 &\equiv& a_{72k+18}-1 \mod 243\\
 &\not \equiv& 0 \mod 243.
\end{eqnarray*}
 Therefore, $v_{3}(a_{i}-1)=4.$
\\Subcase(2): $k\equiv 1 \mod 3. $
In this case we have $i=3^{n}\cdot 8s-30$ where $n\geq 2$ and $3\not|\;s$. Using lemma \ref{lem2.3} and proposition \ref{prop3.4} then we have 
\begin{eqnarray*}
a_{i}-1&=&a_{3^{n}\cdot8s-30}-1=a_{3^{n}\cdot8s-27}-a_{3^{n}\cdot8s-28}-1\\
&=&a_{3^{n}\cdot8s-24}-2a_{3^{n}\cdot8s-25}+a_{3^{n}\cdot8s-26}-1\\
&=&-3a_{3^{n}\cdot8s-21}-2a_{3^{n}\cdot8s-22}+3a_{3^{n}\cdot8s-20}-1\\
&=&-8a_{3^{n}\cdot8s-18}+4a_{3^{n}\cdot8s-17}+a_{3^{n}\cdot8s-16}-1\\
&=&-12a_{3^{n}\cdot8s-12}-26a_{3^{n}\cdot8s-11}+21a_{3^{n}\cdot8s-10}-1\\
&=&73a_{3^{n}\cdot8s-6}-63a_{3^{n}\cdot8s-5}+9a_{3^{n}\cdot8s-4}-1\\
&=&-64a_{3^{n}\cdot8s-4}+136a_{3^{n}\cdot8s-3}-63a_{3^{n}\cdot8s-2}-1\\
&=&a_{3^{n}\cdot8s-2}-200a_{3^{n}\cdot8s-1}+136a_{3^{n}\cdot8s}-1\\
&=&201a_{3^{n}\cdot8s+1}-200a_{3^{n}\cdot8s+2}+135a_{3^{n}\cdot8s}-1\\
&\equiv& 201 \left(3^{n+2}\cdot5s+3^{n+1}\cdot s+1\right)-200\left(3^{n+3}\cdot2s+3^{n+2}\cdot 5s+1\right)\\
&+&135\left(3^{n+3}2s+3^{n+2}\cdot2s\right)-1\mod 3^{n+4}\equiv -3^{n+3}\cdot130s \mod 3^{n+4}.
\end{eqnarray*}
\\Therfore, $v_{3}(a_{i}-1)=n+3=v_{3}(i+30)+3.$
\\Subcase(3) of case(5) and the cases(3),(4),(7), (8) and(9) can be done in the same way.
\end{proof}
\begin{thm}\label{thm3.6}
For all $i\geq 1$, we have
\begin{equation*}
  v_{3}(a_{i}+1)= \left\{
                  \begin{array}{ll}
                0 ,    & \hbox{$i\equiv0,1,2,3,5,6,7\mod 8$;}\\
                1 ,  &  \hbox{$ i\equiv4,12 \mod 24$;}\\
             v_{3}(i+4)+1  ,  &  \hbox {$ i\equiv20\mod 24$.}
             \end{array}
             \right.
\end{equation*}
\end{thm}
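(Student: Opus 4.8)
The plan is to mirror the three-case architecture of Theorem~\ref{thm3.5}, partitioning on the residue of $i$ modulo $8$ and, in the remaining class, modulo $24$.

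First, for the seven residues $i\equiv 0,1,2,3,5,6,7 \pmod 8$ I would establish $v_3(a_i+1)=0$ directly from periodicity modulo $3$. Since $a_n=a_{n-1}+a_{n-3}$ expresses each term through the triple $(a_{n-3},a_{n-2},a_{n-1})$ and the reverse recurrence $a_{n-3}=a_n-a_{n-1}$ makes the shift invertible, the sequence $\{a_n\bmod 3\}$ is purely periodic; a direct check shows it repeats the block $0,1,1,1,2,0,1,0$ with period $8$. Hence $a_i\equiv -1\pmod 3$ exactly when $i\equiv 4\pmod 8$, so for each of the seven listed residues $a_i+1\not\equiv 0\pmod 3$ and $v_3(a_i+1)=0$. (Equivalently, one may run a residue-by-residue induction through Lemma~\ref{lem2.3}, exactly as in Case~(1) of Theorem~\ref{thm3.5}.)

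Second, for $i\equiv 4,12\pmod{24}$ I would prove $v_3(a_i+1)=1$ by induction on the shift $i\mapsto i+24$. Taking $m=24$ in Lemma~\ref{lem2.3} gives
\[
a_{i+24}=a_{23}\,a_{i+2}+a_{21}\,a_{i+1}+a_{22}\,a_i .
\]
By Corollary~\ref{cor3.2} (together with Lemma~\ref{lemma3.1}) one has $a_{21}\equiv a_{23}\equiv 0\pmod 9$ while $a_{22}\equiv 1\pmod 9$, so $a_{i+24}\equiv a_i\pmod 9$; thus the residue of $a_i+1$ modulo $9$ is invariant under the shift. Since the base cases $a_4+1=3$ and $a_{12}+1=42$ both have $3$-adic valuation $1$, the identity $v_3(a_i+1)=1$ propagates to every $k$.

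Third, for $i\equiv 20\pmod{24}$ I would write $i+4=8s\cdot 3^{\,n}$ with $n=v_3(i+4)\ge 1$ and $3\nmid s$, put $M=8s\cdot 3^{\,n}$, and reduce $a_i=a_{M-4}$ to the three anchored values supplied by Proposition~\ref{prop3.3}. Unwinding the recurrence backwards yields the clean identity $a_{M-4}=a_M-2a_{M+1}+a_{M+2}$; substituting the congruences of Proposition~\ref{prop3.3} modulo $3^{\,n+3}$ and simplifying collapses the $3^{n+2}$-terms and leaves
\[
a_i+1\equiv -2s\cdot 3^{\,n+1}\pmod{3^{\,n+3}} .
\]
As $3\nmid s$ and $n+1<n+3$, this pins down $v_3(a_i+1)=n+1=v_3(i+4)+1$. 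The bookkeeping in the second case (the values of $a_{21},a_{22},a_{23}$ modulo $9$) is immediate, and I expect the only genuinely delicate point to be the third case: one must verify that the reverse-recurrence reduction lands precisely on the indices $M,M+1,M+2$ governed by Proposition~\ref{prop3.3}, and that the modulus $3^{\,n+3}$ retains enough precision, which it does since the surviving term has valuation $n+1$, two below the modulus. All remaining sub-cases are dispatched by these same three mechanisms.
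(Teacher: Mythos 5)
Your proposal is correct and follows essentially the same route as the paper's proof: an induction (equivalently, the period-$8$ behaviour of $a_n \bmod 3$) for the residues with valuation $0$, the shift-by-$24$ induction modulo $9$ via $a_{21}\equiv a_{23}\equiv 0$, $a_{22}\equiv 1 \pmod 9$ for $i\equiv 4,12 \pmod{24}$, and the reduction $a_{M-4}=a_{M+2}-2a_{M+1}+a_M$ with $M=8s\cdot 3^n$ fed into Proposition~\ref{prop3.3} to get $a_i+1\equiv -2s\cdot 3^{n+1}\pmod{3^{n+3}}$ for $i\equiv 20\pmod{24}$. All the numerical facts you invoke check out, so this matches the paper's argument in substance.
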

\begin{proof}
\underline{Case(1):}  $ i \equiv {0,1,2,3,5,6,7}\mod 8$.
\\Subcase(1): $ i \equiv 0 \mod 8$. We are going to prove that $ v_{3}(a_{i}+1)=0$ using the principle mathematical induction. At $ k=0$, we have $a_{8}+1 \not\equiv 0 \mod 3.$ Now, Assume that $ a_{8k} +1 \not\equiv 0 \mod  3 $ and we want to prove that $a_{8(k+1)}+1 \not\equiv 0 \mod  3 .$ Using lemma \ref{lem2.3}, we have
\begin{eqnarray*}
a_{8(k+1)}+1&=&a_{8k+8}+1=a_{7}a_{8k+2}+a_{5}a_{8k+1}+a_{6}a_{8k}+1\\
&\equiv& (a_{8k}+1) \mod 3 \not\equiv 0 \mod 3.
\end{eqnarray*}
 Therefore, $v_{3}(a_{i}+1)=0$ .
\\The all other subcases can be done in the same way.
\\\underline{ Case(2):} $i\equiv 4,12 \mod 24$
\\Subcase(1):   $ i \equiv 4 \mod 24$. We are going to  prove that $ v_{3}(a_{i}+1)=1$ using the principle mathematical induction. At $ k=4$, we have $a_4+1 \equiv 0 \mod 3$ and  $a_4+1 \not\equiv 0 \mod 9.$ Now, Assume that $ a_{24k+4} +1 \equiv 0 \mod 3$ and $ a_{24k+4} +1 \not\equiv 0 \mod  9 $ and we want to prove that $a_{24(k+1)+4}+1 \equiv 0 \mod 3$  and  $a_{24(k+1)+4}+1 \not\equiv 0 \mod 9 .$ Using lemma \ref{lem2.3}, we have
\begin{eqnarray*}
a_{24(k+1)+4}+1&=&a_{24k+24+4}+1=a_{23}a_{24k+6}+a_{21}a_{24k+5}+a_{22}a_{24k+4}+1\\
&\equiv& (a_{24k+4}+1) \mod 9 \equiv 0 \mod 3  \not\equiv 0 \mod 9.
\end{eqnarray*}
Therefore, $v_{3}(a_{i}+1)=1$ .
\\Sub case(2) can be done in the same way.
\\\underline{Case(3):}  $i\equiv 20 \mod 24  .$ In this case we have $i=3^{n}\cdot 8s-4$ where $n\geq 1 $ and $3\not| s\;.$ Using lemma \ref{lem2.3} and proposition \ref{prop3.3}. Then, we have  
\begin{eqnarray*}
a_{i}+1&=&a_{8s3^{n}-4}+1=a_{8s3^{n}-1}-a_{8s3^{n}-2}+1\\
&=&a_{8s3^{n}+2}-2a_{8s3^{n}+1}+a_{8s3^{n}}+1 \\
&\equiv& -3^{n+1}\cdot2s  \mod 3^{n+3}.
\end{eqnarray*}
Therefore, $v_{3}(a_{i}+1)=n+1=v_{3}(i+4)+1=n+1.$
\end{proof}
\section{Proof of Theorem \ref{thm 1}}
\begin{proof}
If $n\geq 3$ then no solution for equation \ref{eqn1}. Now Suppose that $n > 3$ and we use the fact 
\begin{equation*}
 \frac{m}{2}-\left\lfloor\frac{\log m}{\log 3}\right\rfloor -1 \leq v_{3}(m!);
 \end{equation*}
 together with theorem \ref{thm3.5} and theorem \ref{thm3.6}. We get, 
\begin{equation*}
 \frac{m}{2}-\left\lfloor\frac{\log m}{\log 3}\right\rfloor -1 \leq v_{3}(m!)=v_{3}(a_{n}-1)+v_{3}(a_{n}+1) \leq v_{3}((n-1)(n+2)(n-2)(n+6)(n+30)(n-3)(n+13)(n+15)(n+4)+16;
 \end{equation*}
 Thus,
\begin{equation*}
\frac{m}{2}-\left\lfloor\frac{\log m}{\log 3}\right\rfloor -1 \leq 9 v_{3}(n+w)+16;
\end {equation*}
where $w \in\left\lbrace-1,2,-2,6,30,-3,13,5,4\right\rbrace$.
Therefore,
\begin{equation*}
3^{\left\lfloor\frac{1}{9}\left(\frac{m}{2}-\left\lfloor\frac{\log m}{\log 3}\right\rfloor -17\right)\right\rfloor}\leq n+w\leq n+30;
\end{equation*} 
By applying the log function, we obtain 
\begin{equation}\label{eqn5}
   \left\lfloor\frac{1}{9}\left(\frac{m}{2}-\left\lfloor\frac{\log m}{\log 3}\right\rfloor-17\right)\right\rfloor \leq \frac{n+30}{\log 3}. 
\end{equation}
On the other hand, $$(1.64)^{2n-6} \leq a_{n}^{2}=m!+1<2\left(\frac{m}{2}\right)^{m};$$ So
\begin{equation*}
    n < 4+(1.33) m \log\left(\frac{m}{2}\right);
\end{equation*}
Substituting in equation \ref{eqn5}, we obtain 
\begin{equation*}
   \left\lfloor\frac{1}{9}\left(\frac{m}{2}-\left\lfloor\frac{\log m}{\log 3}\right\rfloor-17 \right)\right\rfloor \leq \frac{34+1.33 \log\left(\frac{m}{2}\right)}{\log 3}.  
   \end{equation*}
This inequality yields $m \leq 221$. Then $n\leq 1386$. Now, we use a simple routine written in sage which does not return any solution in the range $n \leq 1386$. 
The proof is completed .
\end{proof}
\bibliographystyle{plain}
\bibliography{References}

\end{document}